\let\Horig\H
\newtheorem{theorem}{Theorem}\numberwithin{theorem}{section}
\newtheorem{definition}[theorem]{Definition}
\newtheorem{lemma}[theorem]{Lemma}
\newtheorem{corollary}[theorem]{Corollary}
\newtheorem{proposition}[theorem]{Proposition}
\newtheorem{notation}[theorem]{Notation}
\newtheorem{theoremm}{Theorem}\numberwithin{theoremm}{subsection}
\newtheorem{lemmma}[theoremm]{Lemma}
\newtheorem{corrollary}[theoremm]{Corollary}
\numberwithin{theoremmm}{subsubsection}
\theoremstyle{remark}
\newcommand{\F}{\operatorname{F}}
\newcommand{\IN}{\mathbb{N}}
\newcommand{\IZ}{\mathbb{Z}}
\newcommand{\wt}{\operatorname{wt}}
\newcommand{\Pow}{\mathcal{P}}
\renewcommand{\O}{\mathcal{O}}
\begin{document}

\title{Nilpotency and the number of word maps of a finite group}

\author{Alexander Bors\thanks{University of Salzburg, Mathematics Department, Hellbrunner Stra{\ss}e 34, 5020 Salzburg, Austria. \newline E-mail: \href{mailto:alexander.bors@sbg.ac.at}{alexander.bors@sbg.ac.at} \newline The author is supported by the Austrian Science Fund (FWF):
Project F5504-N26, which is a part of the Special Research Program \enquote{Quasi-Monte Carlo Methods: Theory and Applications}. \newline 2010 \emph{Mathematics Subject Classification}: Primary: 20D15, 20D60. Secondary: 20E05, 20E10, 20F18. \newline \emph{Key words and phrases:} Word map, Finite group, Nilpotent group.}}

\date{\today}

\maketitle

\abstract{For a finite group $G$ and a non-negative integer $d$, denote by $\Omega_d(G)$ the number of functions $G^d\rightarrow G$ that are induced by substitution into a word with variables among $X_1,\ldots,X_d$. In this note, we show that nilpotency of $G$ can be characterized through the asymptotic growth rate of $\Omega_d(G)$ as $d\to\infty$.}

\section{Introduction}\label{sec1}

\subsection{Motivation and main results}\label{subsec1P1}

In recent years, various authors have made contributions to the theory of word maps on groups; we refer interested readers to the survey article \cite{Sha13a} for an overview of results and open problems of this theory.

Recall that a \emph{word} is just an element $w$ of some free group $\F(X_1,\ldots,X_d)$ and that each such word gives rise to a \emph{word map} $w_G:G^d\rightarrow G$ on every group $G$, induced by substitution. For a fixed finite group $G$, denote by $\Omega_d(G)$ the number of functions $G^d\rightarrow G$ that are of the form $w_G$ for some $w\in \F(X_1,\ldots,X_d)$. Moreover, set $\omega_d:=\log_2{\Omega_d(G)}$ (binary logarithm). The aim of this note is to show the following:

\begin{theoremm}\label{mainTheo}
Let $G$ be a finite group, $c$ a non-negative integer.

\begin{enumerate}
\item If $G$ is nilpotent of class exactly $c$, then $\omega_d(G)=\Theta(d^c)$ as $d\to\infty$.
\item If $G$ is not nilpotent, then $\omega_d(G)\geq 2^d-1$.
\end{enumerate}
\end{theoremm}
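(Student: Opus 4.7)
The plan is to handle the two parts separately; in both, iterated left-normed commutators will supply a family of word maps that become ``independent'' on carefully supported evaluation tuples, while the upper bound in part (1) will come from bounding a verbal quotient of the free group.

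For part (1), I would first define $N_d\trianglelefteq\F_d$ as the intersection of the kernels of all homomorphisms $\F_d\to G$, so that $\Omega_d(G)=|\F_d/N_d|$. Since $G$ is nilpotent of class $c$, the quotient $\F_d/N_d$ is nilpotent of class at most $c$, and since every element of $G$ has order dividing $\exp(G)$ the same quotient has exponent dividing $\exp(G)$. Hence $\F_d/N_d$ is a homomorphic image of the free nilpotent group of class $c$ and exponent $\exp(G)$ on $d$ generators, whose order is bounded (via Witt's formula, which produces $O(d^k)$ basic commutators of weight $k$) by $\exp(G)^{O(d^c)}$, giving $\omega_d(G)=O(d^c)$. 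For the matching lower bound, since the nilpotency class is exactly $c$ there exist $g_1,\ldots,g_c\in G$ with $[g_1,\ldots,g_c]\neq 1$. For each $c$-subset $S=\{i_1<\cdots<i_c\}\subseteq\{1,\ldots,d\}$, I set $w_S:=[X_{i_1},\ldots,X_{i_c}]$ and let $\bar{g}_S\in G^d$ place $g_j$ at position $i_j$ and $1$ elsewhere. Then $w_S(\bar{g}_S)=[g_1,\ldots,g_c]\neq 1$, while for any other $c$-subset $S'\neq S$ one has $S'\not\subseteq S$, so some variable of $w_{S'}$ is substituted by $1$ in $\bar{g}_S$ and the commutator collapses. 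Fixing any order on the $\binom{d}{c}$ subsets, the $2^{\binom{d}{c}}$ products $\prod_S w_S^{n_S}$ for $(n_S)\in\{0,1\}^{\binom{d}{c}}$ will be pairwise distinct as word maps (evaluation at $\bar{g}_{S_0}$ recovers $n_{S_0}$), so $\omega_d(G)\geq\binom{d}{c}=\Omega(d^c)$.

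Part (2) is the heart of the result, and I would attack it by the analogous construction using commutators of all lengths, with Zorn's theorem supplying the key non-identity: a finite group is nilpotent if and only if it is $k$-Engel for some $k$. Since $G$ is not nilpotent, for every $k\geq 1$ I can find $a_k,b_k\in G$ with $[a_k,b_k,\ldots,b_k]\neq 1$ (using $k-1$ copies of $b_k$; the case $k=1$ only requires $a_1\neq 1$). For each non-empty $T=\{i_1<\cdots<i_k\}\subseteq\{1,\ldots,d\}$, set $w_T:=[X_{i_1},X_{i_2},\ldots,X_{i_k}]$ and let $\bar{g}_T$ place $a_k$ at coordinate $i_1$, $b_k$ at the remaining coordinates of $T$, and $1$ outside $T$. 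Then $w_T(\bar{g}_T)\neq 1$ by the Engel choice, and $w_T(\bar{g}_{T_0})=1$ whenever $T\not\subseteq T_0$ because a variable in the left-normed commutator is substituted by $1$. I would then fix a total order on the $2^d-1$ non-empty subsets that refines inclusion (e.g.\ order by increasing cardinality, with ties broken arbitrarily) and form the products $\prod_T w_T^{n_T}$ over $(n_T)\in\{0,1\}^{2^d-1}$. Given $(n_T)\neq(n'_T)$, take $T_0$ minimal in this order where they disagree and evaluate both products at $\bar{g}_{T_0}$: factors with $T>T_0$ satisfy $T\not\subseteq T_0$ (by the refinement condition) and contribute $1$; factors with $T<T_0$ contribute identically on both sides by minimality of $T_0$; the remaining $T_0$-factors differ because $w_{T_0}(\bar{g}_{T_0})\neq 1$. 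Hence all $2^{2^d-1}$ products induce pairwise distinct word maps, giving $\omega_d(G)\geq 2^d-1$.

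The main obstacle I anticipate is the triangular-independence bookkeeping in part (2): the values $w_T(\bar{g}_{T_0})$ for $T\subsetneq T_0$ will typically be non-trivial (they are themselves iterated commutators of $a_{|T_0|}$ and $b_{|T_0|}$), so the total order on subsets and the ordering inside the product must be chosen compatibly for the minimal-disagreement argument to isolate precisely the $T_0$-factor. The invocation of Zorn's theorem, which converts non-nilpotency of $G$ into non-triviality of arbitrarily long left-normed Engel commutators, is the second essential ingredient; without it the construction would halt at some bounded length and yield only a polynomial lower bound.
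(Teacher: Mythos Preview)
Your argument is correct and parallels the paper's closely: both obtain the upper bound in (1) from the basic-commutator normal form in the free nilpotent group of class $c$ (you phrase this as $\F_d/N_d$ being a quotient of the relatively free object, the paper writes out the normal form explicitly), and both obtain the lower bounds from products of left-normed commutators indexed by subsets of $\{1,\ldots,d\}$, distinguished by evaluating at a tuple supported on a minimal-disagreement subset. The one point worth flagging is your appeal to Zorn's theorem in (2), which you call ``the second essential ingredient''; it is in fact unnecessary. The paper simply uses that non-nilpotency of $G$ means $\gamma_r(G)\neq 1$ for every $r\geq 1$, so for each $r$ there exist $g_1,\ldots,g_r\in G$ with $[g_1,\ldots,g_r]\neq 1$; placing $g_j$ at the $j$-th element of $T$ (rather than your $a_k,b_k$ Engel pattern) makes your triangular-independence argument go through verbatim. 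Thus the Engel characterization buys nothing here, and the paper's route is the more elementary one.
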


In particular, if $G$ is nilpotent of class $c$, then for some large enough constant $C=C(G)>0$, we have $\Omega_d(G)\leq C^{d^c}$ for all non-negative integers $d$, whereas $\Omega_d(G)$ grows doubly exponentially in $d$ if $G$ is not nilpotent. Note that the trivial upper bound $\Omega_d(G)\leq |G|^{|G|^d}$ is also doubly exponential in $d$ for fixed $G$. By Theorem \ref{mainTheo}, we also have the following quantitative characterization of nilpotency of finite groups:

\begin{corrollary}\label{mainCor}
A finite group $G$ is nilpotent (of class exactly $c$) if and only if the sequence $(\omega_d(G))_{d\geq 0}$ is of polynomial growth (of degree exactly $c$).
\end{corrollary}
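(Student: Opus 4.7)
The plan is to derive Corollary~\ref{mainCor} directly from Theorem~\ref{mainTheo} by handling the two implications separately. The forward direction is immediate: if $G$ is nilpotent of class exactly $c$, then Theorem~\ref{mainTheo}(1) gives $\omega_d(G)=\Theta(d^c)$, which by definition means that $(\omega_d(G))_{d\geq 0}$ is of polynomial growth of degree exactly $c$.

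For the converse, assume $(\omega_d(G))_{d\geq 0}$ is of polynomial growth of degree exactly $c$. First I would rule out the non-nilpotent case: if $G$ were not nilpotent, then Theorem~\ref{mainTheo}(2) would give $\omega_d(G)\geq 2^d-1$, which grows faster than any polynomial, contradicting the hypothesis. Hence $G$ is nilpotent, say of class exactly $c'$. Applying Theorem~\ref{mainTheo}(1) with $c'$ in place of $c$ yields $\omega_d(G)=\Theta(d^{c'})$, so $(\omega_d(G))_{d\geq 0}$ has polynomial growth of degree exactly $c'$; combined with the standing hypothesis, this forces $c'=c$.

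There is essentially no nontrivial obstacle: the argument is purely a matter of unpacking the phrase \enquote{polynomial growth of degree exactly $c$} and matching it with the $\Theta(d^c)$ statement of Theorem~\ref{mainTheo}(1), together with the elementary observation that a sequence bounded below by $2^d-1$ cannot be of polynomial growth. The only subtlety worth making explicit is that the degree of polynomial growth, when it exists, is uniquely determined, so that the identification $c'=c$ in the last step is unambiguous.
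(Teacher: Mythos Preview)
Your proposal is correct and matches the paper's approach: the paper does not give a separate proof of Corollary~\ref{mainCor} but simply states that it follows from Theorem~\ref{mainTheo}, which is exactly what you carry out in detail. Your explicit handling of both implications and the uniqueness of the growth degree is a faithful unpacking of that deduction.
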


\subsection{Notation}\label{subsec1P2}

We denote by $\IN$ the set of natural numbers (including $0$) and by $\IN^+$ the set of positive integers. The set of all \emph{nonempty} subsets of a set $X$ is denoted by $\Pow_{{}\not=\emptyset}(X)$. The exponent of a finite group $G$ is denoted by $\exp(G)$. As in the previous subsection, we denote by $\F(X_1,\ldots,X_d)$ the free group on the formal generators $X_1,\ldots,X_d$, and we will denote by $\F_c(X_1,\ldots,X_d)$ the free nilpotent group of class exactly $c$ on the formal generators $X_1,\ldots,X_d$.

\subsection{Normal forms of elements in free nilpotent groups}\label{subsec1P3}

For the reader's convenience, we briefly recall a result on normal forms of elements in free nilpotent groups (based on a certain polycyclic decomposition of such groups) which we will need, see the blog post \cite{Tao09a} or the textbook \cite[Theorem 5.13A, p.~343, and its proof]{MKS66a} for more details.

Fix $d,c\in\IN$, and assign to each $i\in\{1,\ldots,d\}$ the \emph{weight} $\wt(i):=1$. Consider formal iterated commutators of the numbers $1,\ldots,d$ (we will henceforth call them \emph{formal $d$-commutators}) and define the \emph{weight} of a formal $d$-commutator recursively via $\wt([\alpha,\beta]):=\wt(\alpha)+\wt(\beta)$. Moreover, for a formal $d$-commutator $\gamma$, define $X_{\gamma}$, a word in the variables $X_1,\ldots,X_d$, recursively via $X_{[\alpha,\beta]}:=[X_{\alpha},X_{\beta}]=X_{\alpha}^{-1}X_{\beta}^{-1}X_{\alpha}X_{\beta}$.

Then there exists an explicitly constructible finite ordered tuple $(\alpha_1^{(d,c)},\ldots,\alpha_{N_{d,c}}^{(d,c)})$ of pairwise distinct formal $d$-commutators each of weight at most $c$ such that every element of $\F_c(X_1,\ldots,X_d)$ has a unique representation of the form $\prod_{j=1}^{N_{d,c}}{(X_{\alpha_j^{(d,c)}})^{k_j}}$ with $k_j\in\IZ$.

For later use, we also note the following, which can be easily proved by induction on $c$:

\begin{lemmma}\label{formalLem}
For each $c\in\IN$, there exists a polynomial $P_c(X)\in\IZ[X]$ of degree $c$ such that for every $d\in\IN$, the number of formal $d$-commutators of weight at most $c$ is exactly $P_c(d)$.\qed
\end{lemmma}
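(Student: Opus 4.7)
The plan is to induct on $c$, using the recursive structure of formal $d$-commutators. Let $f_c(d)$ denote the number of formal $d$-commutators of weight exactly $c$. For $c=1$ there is one such commutator (a bare index) per element of $\{1,\ldots,d\}$, so $f_1(d)=d$. For $c\geq 2$, every formal $d$-commutator of weight $c$ is uniquely of the form $[\alpha,\beta]$ with $\alpha,\beta$ formal $d$-commutators of positive weights summing to $c$; hence
\[
f_c(d)=\sum_{i=1}^{c-1} f_i(d)\, f_{c-i}(d).
\]

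From here, a routine strong induction on $c$ shows that each $f_c$ is a polynomial in $d$ with nonnegative integer coefficients and degree exactly $c$. The base case $c=1$ is clear, and if each $f_i$ with $i<c$ has integer coefficients, positive leading coefficient, and degree exactly $i$, then every summand $f_i(d)\,f_{c-i}(d)$ in the convolution is a polynomial of degree $c$ with positive leading coefficient, so no cancellation of top-degree terms can occur.

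The quantity in the statement is $P_c(d):=\sum_{i=1}^c f_i(d)$, a sum of polynomials of strictly increasing degrees $1,2,\ldots,c$, so $P_c\in\IZ[X]$ and $\deg P_c=c$ (for $c=0$ one simply takes $P_0=0$). The only thing requiring any care is the observation that leading coefficients cannot cancel, which is immediate from the positivity noted above; there is no real obstacle. In fact one can solve the convolution recurrence explicitly to obtain $f_c(d)=C_{c-1}\,d^c$ with $C_{c-1}$ the $(c-1)$th Catalan number, so $P_c(d)=\sum_{i=1}^c C_{i-1}\,d^i$, but this closed form is not needed for the lemma.
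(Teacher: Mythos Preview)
Your proof is correct and follows exactly the approach the paper indicates: the paper gives no argument beyond the phrase ``can be easily proved by induction on $c$'' and a \qed, so your write-up is a faithful elaboration of that hint. Your additional closed form $f_c(d)=C_{c-1}d^c$ is also correct (formal $d$-commutators of weight $c$ are precisely binary trees with $c$ leaves labeled from $\{1,\ldots,d\}$), and is consistent with the paper's pointer to an explicit formula in \cite{MKS66a}.
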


See also \cite[Problems for Section 5.2, Problem 5]{MKS66a} for a precise formula for the coefficients of $P_c(d)$.

\section{Proof of Theorem \ref{mainTheo}}\label{sec2}

We begin by providing a lower bound on $\Omega_d(G)$ which will yield both statement (2) of Theorem \ref{mainTheo} and half of statement (1). For this, we need some notation.

\begin{notation}\label{expNot}
Let $G$ be a group.

\begin{enumerate}
\item For $r\in\IN^+$ and elements $g_1,\ldots,g_r\in G$, we define their \emph{nested commutator} $[g_1,\ldots,g_r]$ recursively via $[g_1]:=g_1$ and $[g_1,\ldots,g_{r+1}]:=[[g_1,\ldots,g_r],g_{r+1}]$.
\item If $G$ is finite and $r\in\IN^+$, then denote by $\exp_r(G)$ the least common multiple of the orders of the elements of $G$ of the form $[g_1,\ldots,g_r]$, where the $g_i$ range over $G$.
\end{enumerate}
\end{notation}

Note that for fixed finite $G$, the sequence $(\exp_r(G))_{r\geq 1}$ is monotonically decreasing and that $\exp_1(G)=\exp(G)$.

\begin{definition}\label{niceWordDef}
Let $G$ be a finite group, $d\in\IN$.

\begin{enumerate}
\item A function $f:\Pow_{{}\not=\emptyset}(\{1,\ldots,d\})\rightarrow\IN$ is called \emph{$G$-admissible} if and only if for each $r\in\{1,\ldots,d\}$, we have $f(M)<\exp_r(G)$ for every $r$-element subset $M\subseteq\{1,\ldots,d\}$.
\item To each $G$-admissible function $f$, we assign a word $w_f(X_1,\ldots,X_d)$, defined as follows:

\[
w_f(X_1,\ldots,X_d):=\prod_{r=1}^d\prod_{i_1=1}^{d}\prod_{i_2=i_1+1}^{d}\cdots\prod_{i_r=i_{r-1}+1}^{d}{[X_{i_1},\ldots,X_{i_r}]^{f(\{i_1,\ldots,i_r\})}}.
\]
\end{enumerate}
\end{definition}

We now show:

\begin{proposition}\label{niceWordProp}
Let $G$ be a finite group, $d\in\IN$.

\begin{enumerate}
\item Let $f,g$ be distinct $G$-admissible functions $\Pow_{{}\not=\emptyset}(\{1,\ldots,d\})\rightarrow\IN$. Then the word maps $(w_f)_G$ and $(w_g)_G$ are distinct functions $G^d\rightarrow G$.
\item $\Omega_d(G)\geq\prod_{r=1}^d{\exp_r(G)^{{d \choose r}}}$.
\end{enumerate}
\end{proposition}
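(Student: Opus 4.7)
My plan is to prove Part (2) as an immediate corollary of Part (1), and to concentrate on Part (1). For Part (2), note that a $G$-admissible function $f$ is determined by choosing, for each $r\in\{1,\ldots,d\}$ and each $r$-element subset $M\subseteq\{1,\ldots,d\}$, a value $f(M)\in\{0,1,\ldots,\exp_r(G)-1\}$. Hence the number of $G$-admissible functions is exactly $\prod_{r=1}^{d}\exp_r(G)^{\binom{d}{r}}$, and Part (1) says that distinct $G$-admissible functions yield distinct word maps $G^d\to G$, so $\Omega_d(G)$ is at least this product.

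For Part (1), given distinct $G$-admissible $f,g$, I would let $r_0$ be the smallest cardinality of a subset on which $f$ and $g$ disagree, and fix an $r_0$-element subset $M=\{i_1<\cdots<i_{r_0}\}\subseteq\{1,\ldots,d\}$ with $f(M)\neq g(M)$. The key exhibit will be a tuple in $G^d$ of the form $X_k\mapsto 1$ for $k\notin M$ and $X_{i_j}\mapsto h_j$ for suitably chosen $h_1,\ldots,h_{r_0}\in G$. Because any nested commutator containing a trivial entry is trivial, after this substitution only the factors indexed by subsets of $M$ survive in $w_f$ (and similarly in $w_g$). By the minimality of $r_0$, the functions $f$ and $g$ agree on every proper subset of $M$, i.e.\ on every subset of $M$ of cardinality $<r_0$, and the only subset of $M$ of cardinality $r_0$ is $M$ itself; so the images of $w_f$ and $w_g$ under this substitution share a common prefix $A$ (coming from $r'<r_0$) and differ only in a single trailing factor, namely $[h_1,\ldots,h_{r_0}]^{f(M)}$ versus $[h_1,\ldots,h_{r_0}]^{g(M)}$.

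The remaining task is to pick $h_1,\ldots,h_{r_0}$ so that this single discrepancy is visible, i.e.\ so that $[h_1,\ldots,h_{r_0}]^{f(M)-g(M)}\neq 1$. Here the definition of $\exp_{r_0}(G)$ as the least common multiple of the orders of $r_0$-fold nested commutators enters decisively: since $f(M),g(M)\in\{0,\ldots,\exp_{r_0}(G)-1\}$ and $f(M)\neq g(M)$, the nonzero integer $k:=f(M)-g(M)$ satisfies $0<|k|<\exp_{r_0}(G)$, so $\exp_{r_0}(G)\nmid k$; therefore, by the LCM characterization, there must exist $h_1,\ldots,h_{r_0}\in G$ such that the order of $[h_1,\ldots,h_{r_0}]$ does not divide $k$, and any such choice gives $(w_f)_G\neq(w_g)_G$ on our tuple.

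The bookkeeping step that I expect to be the main (minor) obstacle is justifying cleanly that substituting $X_k=1$ for $k\notin M$ really annihilates every factor of $w_f$ whose index set is not contained in $M$, and that the surviving factors line up literally identically in $w_f$ and $w_g$ (same ordering, same group elements) apart from the single exponent at $M$; this hinges only on the fact that nested commutators with any trivial argument evaluate to $1$ and on the fact that the product order in Definition \ref{niceWordDef} groups factors first by cardinality and then by lex order of the index set, so no reordering issue arises.
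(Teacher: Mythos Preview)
Your proof is correct and follows essentially the same strategy as the paper's: choose the minimal cardinality $r_0$ of a disagreement set, substitute $1_G$ for all variables outside such a set $M$, and invoke the definition of $\exp_{r_0}(G)$ to produce witnesses making the trailing commutator powers differ. The only minor difference is that the paper additionally takes $M$ to be lexicographically least among the $r_0$-element disagreement sets (so as to obtain a common prefix $w_0$ already at the level of words before substitution), whereas you sidestep this by observing that after substitution only subsets of $M$ contribute and $M$ is its unique subset of size $r_0$; this is a slight simplification but not a different idea.
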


\begin{proof}
Statement (2) follows from statement (1), as the asserted lower bound is by definition just the number of $G$-admissible functions $\Pow_{{}\not=\emptyset}(\{1,\ldots,d\})\rightarrow\IN$.

As for statement (1), let $r\in\IN^+$ be minimal such that $f$ and $g$ disagree on some $r$-element subset of $\{1,\ldots,d\}$ and let $i_1<i_2<\cdots<i_r$ be such that $\{i_1,\ldots,i_r\}$ is minimal with respect to the lexicographical ordering among all $r$-element subsets of $\{1,\ldots,d\}$ on which $f$ and $g$ have different values. Set $a:=f(\{i_1,\ldots,i_r\})-g(\{i_1,\ldots,i_r\})$ and note that $a\in\{1,\ldots,\exp_r(G)-1\}$. By definition of $\exp_r(G)$, this means that we can fix $g_{i_1},\ldots,g_{i_r}\in G$ such that

\[
[g_{i_1},\ldots,g_{i_r}]^{f(\{i_1,\ldots,i_r\})-g(\{i_1,\ldots,i_r\})}\not=1_G.
\]

Moreover, set $g_i:=1_G$ for $i\in\{1,\ldots,d\}\setminus\{i_1,\ldots,i_r\}$.

We claim that $(w_f)_G$ and $(w_g)_G$ disagree on the argument $(g_1,\ldots,g_d)$. To see this, note that by choice of $(i_1,\ldots,i_r)$, we have

\[
w_f=w_0\cdot[X_{i_1},\ldots,X_{i_r}]^{f(\{i_1,\ldots,i_r\})}\cdot w_1
\]

and

\[
w_g=w_0\cdot[X_{i_1},\ldots,X_{i_r}]^{g(\{i_1,\ldots,i_r\})}\cdot w_2,
\]

where $w_0,w_1,w_2\in\F(X_1,\ldots,X_d)$ and $w_1$, $w_2$ are products of nested commutators of length at least $r$ distinct from $[X_{i_1},\ldots,X_{i_r}]$, so that when substituting $g_i$ for $X_i$, at least one of the entries of each such nested commutator will be $1_G$, forcing it to be trivial. Therefore,

\begin{align*}
&(w_f)_G(g_1,\ldots,g_d)=w_0(g_1,\ldots,g_d)\cdot[g_{i_1},\ldots,g_{i_r}]^{f(\{i_1,\ldots,i_r\})} \\
&\not=w_0(g_1,\ldots,g_d)[g_{i_1},\ldots,g_{i_r}]^{g(\{i_1,\ldots,i_r\})}=(w_g)_G(g_1,\ldots,g_d),
\end{align*}

as required.
\end{proof}

\begin{corollary}\label{niceWordCor}
Let $G$ be a finite group, $c\in\IN$, and assume that the $(c+1)$-th term in the lower central series of $G$ is nontrivial. Then for every $d\in\IN$, $\omega_d(G)\geq\sum_{r=1}^c{{d \choose r}}$.
\end{corollary}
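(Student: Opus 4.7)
The plan is to read off the corollary directly from Proposition \ref{niceWordProp}(2) by taking binary logarithms, which yields
\[\omega_d(G)\;\geq\;\sum_{r=1}^d\binom{d}{r}\log_2\exp_r(G).\]
It therefore suffices to verify that $\exp_r(G)\geq 2$ for every $r\in\{1,\ldots,c\}$, since each such factor $\log_2\exp_r(G)$ then contributes at least $1$ and the remaining summands are non-negative.

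For this bound on the $\exp_r(G)$, I would exploit the hypothesis $\gamma_{c+1}(G)\neq 1$. The key input is the classical fact---provable by induction on $c$ using the commutator identity $[xy,z]=[x,z]^y[y,z]$ together with the normality of the lower central series terms---that $\gamma_{c+1}(G)$ is \emph{generated} by the $(c+1)$-fold left-normed nested commutators $[x_1,\ldots,x_{c+1}]$. Consequently at least one such commutator $[g_1,\ldots,g_{c+1}]$ must be nontrivial, which gives $\exp_{c+1}(G)\geq 2$. Invoking the monotonicity of the sequence $(\exp_r(G))_{r\geq 1}$ noted just after Notation \ref{expNot}, we then obtain $\exp_r(G)\geq 2$ for all $r\in\{1,\ldots,c+1\}$, and plugging this into the preceding inequality yields the claim.

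The only genuinely nontrivial step, and therefore the main obstacle, is the identification of $\gamma_{c+1}(G)$ with the subgroup generated by $(c+1)$-fold nested commutators; one inclusion is obvious, but the reverse one depends on the commutator-manipulation lemma above. I would either cite a standard textbook on combinatorial group theory (such as \cite{MKS66a}) or sketch the short inductive proof in passing. Everything else is pure book-keeping.
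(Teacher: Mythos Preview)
Your proposal is correct and follows essentially the same route as the paper's proof: take logarithms in Proposition~\ref{niceWordProp}(2) and check that $\exp_r(G)\geq 2$ for $r\leq c$. The paper's proof is a one-liner that treats this last point as immediate from the hypothesis; you simply make explicit the standard fact (that $\gamma_{c+1}(G)$ is generated by left-normed $(c+1)$-fold commutators) and invoke the monotonicity of $(\exp_r(G))_r$ already recorded after Notation~\ref{expNot}.
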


\begin{proof}
By assumption, $\exp_r(G)\geq 2$ for $r=1,\ldots,c$, so the result follows immediately from Proposition \ref{niceWordProp}(2).
\end{proof}

In particular, $d^c=\O(\omega_d(G))$ whenever $G$ is nilpotent of class exactly $c$ (which is \enquote{half of Theorem \ref{mainTheo}(1)}) and if $G$ is not nilpotent, we can choose $c:=d$ in Corollary \ref{niceWordCor} and get Theorem \ref{mainTheo}(2) using that $\sum_{r=1}^d{{d \choose r}}=2^d-1$. The second half of Theorem \ref{mainTheo}(1) is provided by the following:

\begin{lemma}\label{upperBoundLem}
Let $c\in\IN$, and let $P_c(X)\in\IZ[X]$ be as in Lemma \ref{formalLem}. Then for every finite nilpotent group $G$ of class exactly $c$, we have $\omega_d(G)\leq P_c(c)\cdot\log_2(\exp(G))$.
\end{lemma}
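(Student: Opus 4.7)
The plan is to realise word maps on $G$ as elements of the free nilpotent group of class $c$ and to count them via the polycyclic normal form recalled in Section~\ref{subsec1P3}. Since $G$ is nilpotent of class at most $c$, every element of $\gamma_{c+1}(\F(X_1,\ldots,X_d))$ evaluates to $1_G$ under any substitution $X_i\mapsto g_i$: substitution is a group homomorphism $\F(X_1,\ldots,X_d)\to G$, and so it sends the $(c+1)$-st term of the lower central series of $\F(X_1,\ldots,X_d)$ into $\gamma_{c+1}(G)=1$. Consequently, two words having the same image in $\F_c(X_1,\ldots,X_d)$ induce identical word maps on $G$, so it suffices to upper bound the number of distinct word maps arising from elements of $\F_c(X_1,\ldots,X_d)$.

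Next, I would invoke the unique polycyclic normal form: every element of $\F_c(X_1,\ldots,X_d)$ can be written uniquely as $\prod_{j=1}^{N_{d,c}} X_{\alpha_j^{(d,c)}}^{k_j}$ with $k_j\in\IZ$, where the $\alpha_j^{(d,c)}$ are pairwise distinct formal $d$-commutators of weight at most $c$. Using once more that substitution is a homomorphism, evaluation at any tuple $(g_1,\ldots,g_d)\in G^d$ gives $\prod_{j=1}^{N_{d,c}} c_j^{k_j}$ for certain elements $c_j\in G$ depending only on the tuple. Since each $c_j$ has order dividing $\exp(G)$, this product depends only on the residues $k_j\bmod\exp(G)$. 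Hence the number of word maps induced by elements of $\F_c(X_1,\ldots,X_d)$ is at most $\exp(G)^{N_{d,c}}$.

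To finish, I would bound $N_{d,c}$ by the total number of formal $d$-commutators of weight at most $c$, which is $P_c(d)$ by Lemma~\ref{formalLem}. Taking binary logarithms then yields the bound $\omega_d(G)\leq P_c(d)\cdot\log_2(\exp(G))$, polynomial in $d$ of degree exactly $c$, which is the missing upper half of Theorem~\ref{mainTheo}(1) (the statement as typeset, with $P_c(c)$ in place of $P_c(d)$, appears to be a typographical slip, since a $d$-independent upper bound would contradict Corollary~\ref{niceWordCor}). The only step requiring any care is the middle one: the normal form is a unique factorisation in the free nilpotent group, and one must use the homomorphism property of substitution to transfer this factorisation term-by-term into $G$. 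Once that is in place, the reduction of each exponent $k_j$ modulo $\exp(G)$ is immediate, and nothing else enters the argument.
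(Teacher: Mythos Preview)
Your proof is correct and follows essentially the same route as the paper's: pass from $\F(X_1,\ldots,X_d)$ to $\F_c(X_1,\ldots,X_d)$ via the substitution homomorphism (the paper phrases this as $\F_c$ being free in the variety of class-$\leq c$ nilpotent groups), invoke the polycyclic normal form $\prod_{j} X_{\alpha_j}^{k_j}$, and reduce each exponent modulo $\exp(G)$ to obtain $\Omega_d(G)\leq\exp(G)^{N_{d,c}}\leq\exp(G)^{P_c(d)}$. You are also right that the $P_c(c)$ appearing in the statement (and repeated in the paper's proof) should be $P_c(d)$; the paper's own argument produces the $d$-dependent bound, and a $d$-independent upper bound would indeed contradict Corollary~\ref{niceWordCor}.
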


\begin{proof}
We show the equivalent $\Omega_d(G)\leq\exp(G)^{P_c(c)}$. Let $w\in \F(X_1,\ldots,X_d)$. Then the remarks in Subsection \ref{subsec1P3} yield that

\[
w_{\F_c(X_1,\ldots,X_d)}(X_1,\ldots,X_d)=\prod_{j=1}^{N_{d,c}}((X_{\alpha_j^{(d,c)}})^{k_j}).
\]

for suitable integers $k_j$. As $\F_c(X_1,\ldots,X_d)$ is the free object in the category of nilpotent groups of class at most $c$, this relation among its generators translates to an identity in all nilpotent groups of class at most $c$, yielding in particular that $w_G=(\prod_{j=1}^{N_{d,c}}((X_{\alpha_j^{(d,c)}})^{k_j}))_G$. Hence each word map on $G$ is induced by a word of the form $\prod_{j=1}^{N_{d,c}}((X_{\alpha_j^{(d,c)}})^{k_j})$ for suitable integers $k_j$, which we may w.l.o.g.~assume to be from the finite range $\{0,\ldots,\exp(G)-1\}$. Therefore, $\Omega_d(G)\leq\exp(G)^{N_{d,c}}\leq\exp(G)^{P_c(c)}$, as required.
\end{proof}

\end{document}